\renewcommand{\geq}{\geqslant}
\renewcommand{\leq}{\leqslant}
\newtheorem{theorem}{Theorem}
\newtheorem{cor}[theorem]{Corollary}
\newtheorem*{thm}{Theorem}
\newtheorem*{cor*}{Corollary}
\begin{document}

\title{The groups of branched twist-spun knots}

\author{Jonathan A. Hillman }
\address{School of Mathematics and Statistics\\
     University of Sydney, NSW 2006\\
      Australia }

\email{jonathanhillman47@gmail.com}

\begin{abstract}
We characterize the groups of branched twist spins of classical knots in terms of 3-manifold groups, and also give a purely algebraic,  
conjectural characterization in terms of $PD_3$-groups.
We show also that each group is the group of at most finitely many branched twist spins.
\end{abstract}

\keywords{cyclic branched cover, group, knot, twist spin}

\maketitle

In this note we shall give a characterization of the groups of branched twist spins
in terms of 3-manifold groups (Theorem 2),
and then give an alternative, purely algebraic formulation,
which depends on the assumption that every $PD_3$-group
is the fundamental group of an aspherical closed 3-manifold.
The constructive aspect of the argument derives from the
work of S. P. Plotnick \cite{Pl83,Pl86}.
We show also that if two fibred knots have irreducible fibre 
and isomorphic groups then the knot manifolds are homeomorphic,
unless the fibres are lens spaces (Theorem 1), 
and that if the group of a 2-knot $K$ is isomorphic to the group
of a branched twist spin $\tau_{m,s}k$ of a prime knot $k$
then the knot manifolds are $s$-cobordant (Corollary 5).
Finally, we show that each group is the group of at most finitely many 
branched twist spins (Theorem 12).
The latter result extends the work of M. Fukuda and M. Ishikawa,
who show that if $k_1$ and $k_2$ are distinct knots and $m_1\not=m_2$ 
then  $\tau_{m_1,s_1}k_1$ and $\tau_{m_2,s_2}k_2$ are distinct (for any $s_1, s_2$)
\cite{Fu22}, and that if $k_1$ and $k_2$ are distinct prime knots
then in most cases $\tau_{m,s}k_1$ and $\tau_{m,s}k_2$ are distinct \cite {FI23}.

\section{terminology}

We review briefly some knot-theoretic terminology used here.
(See \cite[Chapter 14]{FMGK} for more details.)

An {\it$n$-knot\/} is a locally flat embedding $K:S^n\to{S^{n+2}}$.
(Our interest here is in the cases $n=1$ and 2.)
The {\it exterior\/} $X(K)$ is the closed complement of a tubular neighbourhood of $K(S^n)$ 
in $S^{n+2}$, and $\partial{X(K)}\cong{S^n}\times{S^1}$.
The {\it knot group\/} is $\pi{K}=\pi_1(X(K))$.
If we fix orientations for the spheres then the boundary of a disc $D^2$ transverse to $K$
determines a well-defined conjugacy class of {\it meridians}.
The knot group is normally generated by the image of the meridians.
The {\it knot manifold\/} $M(K)=X(K)\cup{D^{n+1}\times{S^1}}$ is 
the closed $(n+2)$-manifold obtained by elementary surgery on the knot. 
(If $n=1$ we require that $K$ have framing 0.)
If $K_1$ and $K_2$ are two $n$-knots then we shall write $K_1\cong{K_2}$ 
if there is a self-homeomorphism $h$ of $S^{n+2}$
such that $h(K_1(S^n))=K_2(S^n)$.
(We use this equivalence relation rather than ambient isotopy
because the invariant of primary interest here is the knot group,
which does not reflect the orientations.)

A {\it weight element\/} in $\pi{K}$ is an element which normally generates the group,
and a {\it weight class\/} is the conjugacy class of a weight element.
In general,  a knot group may have many weight classes.
If $n>1$ then $\pi_1(M(K))\cong\pi{K}$ and each weight class determines 
an isotopy class of simple closed curves in $M(K)$.
Elementary surgery on such a curve gives a homotopy $(n+2)$-sphere
$X(K)\cup_fS^n\times{D^2}$,
and the cocore $S^n\times\{0\}$ of the surgery is an $n$-knot.
However there are two possible framings for this surgery, 
and the corresponding knots $K$ and $K^*$ may be distinct.
We call the knot $K^*$ the ``Gluck reconstruction" of $K$,
and say that $K$ is {\it reflexive} if $K\cong{K^*}$.
Thus we may recover $K$ from $M(K)$ and the weight class of a meridian,
up to an ambiguity of order $\leq2$.

Knot groups have abelianization $\mathbb{Z}$, 
and so every knot group $\pi$ is a semidirect product $\pi'\rtimes_\theta\mathbb{Z}$,
where $\pi'$ is the commutator subgroup.
The characteristic map $\theta$ is induced by conjugation by a meridian,
and the image of $\theta$ on the outer automorphism group $Out(\nu)$ 
depends only on the weight class.
Two knot groups $\nu\rtimes_\sigma\mathbb{Z}$ and $\nu\rtimes_\theta\mathbb{Z}$ 
with isomorphic commutator subgroup $\nu$ are isomorphic if and only if the images 
of $\sigma$ and $\theta$ in $Out(\nu)$ are conjugate, 
up to inversion \cite{Pl83}.

If $K$ is a knot then the proper invariant determined by the meridians is 
 the {\it weight orbit},
which is the orbit of a meridian under the action of $Aut(\pi{K})$ \cite[14.\S7]{FMGK}.
(Different isomorphisms between the same pair of groups may not carry 
a given element to conjugate elements, 
but the images will agree up to the action of an automorphism of the target group.)
Let $\theta$ be the automorphism of $\pi'$ induced by conjugation by a weight element $t$
in $\pi$.
Then the index 2 subgroup of automorphisms which induce the identity 
on $\pi/\pi'$ is
\[
Aut^+(\pi{K})=\{(g,d)\in\pi'\rtimes{Aut(\pi')}\mid
d\theta{d^{-1}}\theta^{-1}=c_g\}
\] 
where $c_g(x)=gxg^{-1}$ for $x\in\nu$.
The automorphism $(g,d)$ acts on $\pi'$ through $d$ and sends $t$ to $gt$.
Each strict weight orbit contains at most $|Out(\nu)|$ weight classes.
In many cases considered in this note the knot group determines the knot manifold 
up to $s$-cobordism, 
and the knot group together with the weight orbit determines the knot exterior
up to relative $s$-cobordism.

Let $M_m(k)$ be the $m$-fold cyclic branched cover of $S^3$, 
branched over a knot $k$.
If $k$ is prime and $m\geq3$ then $M_m(k)$ is generically aspherical,
with the following exceptions.
If $m=3,4 $ or 5 and $k$ is the trefoil knot $3_1$ then $\nu=\pi_1(M_m(k))\cong{Q(8)}$,
$T^*_1$ or $I^*$, respectively, 
and if $m=3$ and $k$ is the (2,5)-torus knot $5_1$  and 
then $\nu\cong{I^*}$.
If $m=3$ and $k$ is the figure eight knot $4_1$ then
$M_m(k)$ is the Hantzsche-Wendt flat 3-manifold.
Otherwise,  $M_m(k)$ is an $\widetilde{\mathbb{SL}}$-manifold if $k$ is a torus knot,
is hyperbolic if $k$ is simple but not a torus knot (and $(k,m)\not=(4_1,3)$),
and is Haken if $k$ is a satellite knot \cite{Du88}.

The 2-fold branched covers of Montesinos knots are Seifert fibred, 
but the covering involution acts non-trivially on the fibre \cite{Mo73}.
In particular, if $k$ is a 2-bridge knot then $M_2(k)$ is a lens space.
The other elliptic 3-manifolds (excepting those with group 
$Q(8)\times\mathbb{Z}/d\mathbb{Z}$ for some $d>1$)
are 2-fold branched covers of certain pretzel knots \cite{Mo73}.
In all cases,  $M_m(k)$ is either aspherical or has universal cover $S^3$,
and $\nu$ is either a $PD_3$-group or is finite.

All the 2-knot groups with finite commutator subgroup (excepting 
$\pi'\cong{Q(8)\times\mathbb{Z}/d\mathbb{Z}}$, $d>1$) 
are realized by 2-twist spins of 2-bridge knots or pretzel knots \cite{Yo80}.
The 2-twist spins of Montesinos knots with infinite commutator subgroup
$\pi'$ have abelian normal subgroups of rank 2,
which are not central.

Let $\zeta{G}$ denote the centre of a group $G$.

\section{fibred knots}

An $n$-knot $K$ is {\it fibred\/} if there is an $(n+1)$-manifold $F$
(the fibre) with boundary $\partial{F}=S^n$ and a self-homeomorphism  
$c:F\to{F}$ (the monodromy) such that the knot exterior $X(K)$ is the mapping torus 
\[
MT(c)=F\times[0,1]/(f,1)\sim(c(f),0), ~\forall{f}\in{F}.
\]
The {\it closed fibre\/} of the knot is $\hat{F}=F\cup{D^{n+1}}$,
and the {\it closed monodromy} $\hat{c}$
is the extension of the monodromy $c$ to $\hat{F}$,
obtained by coning off the boundary.
Thus $\hat{c}$ has a fixed point, 
which determines a section $S^1\subset{MT(\hat{c})}$.
If $K$ is fibred, with fibre $F$,
then $\pi{K}'\cong\nu=\pi_1(F)$ and $\pi\cong\nu\rtimes_{c_*}\mathbb{Z}$.
Fibred 2-knots with monodromy of order 2 are determined by their complements, 
but no fibred 2-knot with monodromy of finite odd order $m>1$ is reflexive
\cite[Theorem 6.2]{Pl86}.

In the classical case $n=1$ a knot $k$ is fibred if and only if 
$\pi{k}'$ is finitely generated, and then it is a free group.
If $K$ is  a fibred 2-knot then the fibre is determined by $\pi{K}'$
(up to lens space summands).
Standard results of 3-manifold theory show that the knot group of a fibred 2-knot 
with irreducible fibre determines the knot manifold,
except when $\pi'$ is finite cyclic.
(The results invoked in Theorem 1 are scattered across a number of original papers, 
and there is no one convenient reference.) 
The case of twist spins of torus knots was treated in
 \cite[Theorem 16.6]{FMGK}.

\begin{theorem}
If $K_1$ and $K_2$ are fibred $2$-knots such that $\pi=\pi{K_1}\cong\pi{K_2}$ and 
$\pi'$ is indecomposable but not finite cyclic then $M(K_1)\cong{M(K_2)}$.
\end{theorem}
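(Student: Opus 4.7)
The plan is to identify the closed fibres $\hat{F}_1$ and $\hat{F}_2$ of $K_1$ and $K_2$, realize the two monodromies by self-homeomorphisms of a single common fibre, and then conclude by the invariance of the mapping torus construction.

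First I would show $\hat{F}_1\cong\hat{F}_2$. The closed fibre of a fibred $2$-knot is determined by $\pi{K}'$ up to lens space summands, and any non-trivial lens space summand of a closed $3$-manifold contributes a non-trivial finite cyclic free factor to $\pi_1$. Hence if $\pi'$ is indecomposable and not finite cyclic every such summand must be $S^3$, and $\hat{F}_1\cong\hat{F}_2$. Call this common closed orientable $3$-manifold $\hat{F}$; it is prime, and is either aspherical and irreducible or an elliptic $3$-manifold with non-cyclic finite fundamental group.

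Next I would compare the monodromies. Writing $\pi\cong\nu\rtimes_{c_{i*}}\mathbb{Z}$ with $\nu=\pi'=\pi_1(\hat{F})$, the isomorphism $\pi{K_1}\cong\pi{K_2}$ together with the Plotnick criterion quoted in the introduction forces $[c_{1*}]$ and $[c_{2*}]$ in $Out(\nu)$ to be conjugate up to inversion. I would then appeal to the fact that for each of the manifolds $\hat{F}$ on the above list, the natural map $\pi_0(Homeo(\hat{F}))\to Out(\pi_1(\hat{F}))$ is a bijection: for Haken aspherical $\hat{F}$ this is Waldhausen's theorem, for the remaining aspherical geometries it follows from Mostow rigidity (hyperbolic case) together with the direct analysis of Seifert fibred automorphism groups, and for the non-cyclic spherical space forms it is part of the classical description of the mapping class group of an elliptic $3$-manifold. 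Granting this, one may choose $\phi\in Homeo(\hat{F})$ so that $\hat{c}_1$ is isotopic to $\phi\hat{c}_2^{\pm1}\phi^{-1}$. Since the mapping torus depends only on the isotopy class of its monodromy, and since $MT(\hat{c})\cong MT(\phi\hat{c}\phi^{-1})$ via $(x,t)\mapsto(\phi(x),t)$ and $MT(\hat{c})\cong MT(\hat{c}^{-1})$ by reversing the circle direction, it follows that $M(K_1)=MT(\hat{c}_1)\cong MT(\hat{c}_2)=M(K_2)$.

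The main obstacle is the realization step — gathering, across all the relevant geometries, the fact that every outer automorphism of $\pi_1(\hat{F})$ is induced by a self-homeomorphism. This is exactly the property that fails for lens spaces, where only $\pm1\in(\mathbb{Z}/p)^{\times}=Out(\pi_1)$ is realized by the mapping class group, and it is precisely why the finite cyclic case must be excluded from the statement.
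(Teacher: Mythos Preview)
Your proof is correct and follows essentially the same line as the paper's: identify the closed fibre from $\pi'$, then use the case-by-case rigidity results (Haken, hyperbolic, Seifert fibred, elliptic non-cyclic) asserting that $\pi_0(\mathrm{Homeo}(\hat{F}))\to Out(\pi_1(\hat{F}))$ is a bijection, so that the conjugacy class (up to inversion) of the meridianal outer automorphism determines the monodromy up to isotopy and hence the mapping torus. You spell out more explicitly than the paper does the preliminary step that $\hat{F}_1\cong\hat{F}_2$ and the invocation of Plotnick's criterion for isomorphism of the semidirect products, but the substance is the same.
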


\begin{proof}
Let $N$ be the closed fibre of $K_1$. 
Then $\pi_1(N)\cong\pi'$, and so $N$ is either aspherical
or is a quotent of $S^3$.
If $N$ is aspherical it is either Haken, hyperbolic or Seifert fibred.
In each case,  the homotopy class of the monodromy of an $N$-bundle over $S^1$ 
is determined by its image in $Out(\pi')$,
and homotopic self-homeomorphisms of $N$ are isotopic.
(See \cite{Mo68} and \cite{GMT03} for hyperbolic 3-manifolds,
\cite{Wd68} and \cite{Zn86} for Haken 3-manifolds, 
\cite{Sc85} and \cite{BO91} for the infinite Seifert fibred cases,
and \cite{McC02} for the cases with $\pi'$ finite.)
\end{proof}

If $\pi'\cong*^rG_i$ where $G_i$ has one end then 
the kernel of the natural map from the group of homotopy classes of homotopy self-equivalences of $N$  to $Out(\pi')$ has order $2^{r-1}$ \cite{McC81},
and so the homotopy class of the monodromy is not determined by its image in $Out(\pi')$
if $r>1$.
Moreover, ``homotopy implies isotopy" may no longer hold \cite{FW86}.

\section{the groups of  branched twist spins}

Let $\tau_{m,s}k$ be the $s$-fold branched cyclic cover of the $m$-twist spin $\tau_mk$.
(We shall refer to such 2-knots $\tau_{m,s}k$ as ``branched $m$-twist spins", 
or just ``branched twist spins", for brevity,
and  write $\tau_mk$ instead of $\tau_{m,1}k$.)
The knot manifold $M(\tau_{m,s}k)$ is the mapping torus of the $s$th power of 
of a meridianal generator of the monodromy of $M_m(k)$ over $S^3$.
There are $\varphi(m)=|\mathbb{Z}/m\mathbb{Z}^\times|$ possible choices for $s$
{\it mod} $(m)$, and $X(\tau_{m,-s}k)\cong{X(\tau_{m,s}k)}$,  
via an orientation reversing homeomorphism.
Hence these branched covers represent at most $\varphi(m)$ distinct knots.

S.P. Plotnick has shown that (modulo the 3-dimensional Poincar\'e Conjecture)
the exterior of a fibred knot is the exterior of a branched twist spin if and only if 
the monodromy (of the fibration of the knot exterior) has finite order \cite{Pl86}.
Thus such a knot is (up to Gluck reconstruction) a branched twist spin.
If $m$ is odd (and $>1$) then $\tau_mk^*\not\cong\tau_mk$ \cite{Pl86}.
On the other hand,  $\tau_2k$ is always reflexive.

If $k=k_{p,q}$ is the $(p,q)$-torus knot then $M_m(k)$ is the Brieskorn 3-manifold
\[
M(m,p,q)=\{(u,v,w)\in{S^5\subset\mathbb{C}^3}\mid{u^m+v^p+w^q=0}\},
\]
and the monodromy is generated by $(u,v,w)\mapsto(\zeta_mu,v,w)$,
where $\zeta_m=exp(\frac{2\pi{i}}m)$ is a primitive $m$th root of unity.
The map from $M(m,p,q)$ to $\mathbb{CP}^1=S^2$ given by $(u,v,w)\mapsto[v^p:w^q]$
is invariant under the monodromy, 
and induces a Seifert fibration of $M(\tau_{m,s}k)$ 
over the orbifold $S^2(m,p,q)$, with general fibre the torus,
for each $s$ with $(s,m)=1$.

\begin{theorem}
A knot group $\pi$ is the group of a branched $m$-twist spin of a classical knot,
where $m>1$,
if and only if
\begin{enumerate}
\item{}there is a weight element $t$ such that $t^m$ is central;
\item{}the indecomposable factors of the commutator subgroup $\pi'$ are $3$-manifold groups;
\item{}conjugation by $t$ induces an automorphism of each such factor;
\item{}if $G_i$ is an infinite factor of $\pi'$ this automorphism has order $m$;
\item{}$\pi'$ has no $\mathbb{Z}$ factor and no factor $Q(8)\times\mathbb{Z}/d\mathbb{Z}$ with $d>1$;
\item{} if $m=3$ the only finite factors of $\pi'$ are  $Q(8)$ or $I^*$; 
if $m=4$ only $T^*_1$; if $m=5$ only $I^*$; and if $m>5$ then $\pi$ is torsion-free.
\end{enumerate}
\end{theorem}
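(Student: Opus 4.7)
The plan is to prove the two directions separately, using the fibration structure of branched twist spins for necessity and Plotnick's construction for sufficiency.

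For necessity, suppose $\pi = \pi(\tau_{m,s}k)$ for some classical knot $k$ and $s$ coprime to $m$. The knot exterior $X(\tau_{m,s}k)$ fibres over $S^1$ with closed fibre $M_m(k)$, so $\pi'\cong\pi_1(M_m(k))$ and the monodromy is induced by the $s$th power of the generating deck transformation of $M_m(k)\to S^3$. Condition~(1) is then immediate because $t^m$ centralises $\pi'$ (the $m$th power of a generator of the deck group is trivial) and commutes with $t$, so is central in $\pi=\pi'\rtimes\langle t\rangle$. For (2) and (3) I would invoke that the $m$-fold cyclic branched cover of a connected sum of knots is the connected sum of the $m$-fold covers of the prime factors, so the Kneser--Milnor decomposition of $M_m(k)$ is controlled by the prime decomposition of $k$ and the deck transformation preserves each summand. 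Conditions (4), (5), (6) then follow from the classification of $\pi_1(M_m(k))$ for prime $k$ recalled in the terminology section: no $\mathbb{Z}$-factor can arise because $S^2\times S^1$ is never a cyclic branched cover of $S^3$ over a knot, and the listed finite groups exhaust the possibilities for $m\leq 5$.

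For sufficiency, I would follow Plotnick's constructive scheme \cite{Pl83,Pl86}. First, assemble a closed orientable 3-manifold $N$ with $\pi_1(N)\cong\pi'$ by realising each indecomposable factor as a prime 3-manifold (possible by~(2) and the exclusions in~(5)) and taking their connected sum. Next, realise the outer automorphism class of conjugation by $t$ as an orientation-preserving self-homeomorphism $f\colon N\to N$ of order exactly $m$: on each infinite irreducible summand this uses Theorem~1 together with condition~(4), while on each finite summand permitted by~(6) one uses the standard $\mathbb{Z}/m$-action coming from its spherical space form structure (the one actually arising as a deck transformation of the relevant $M_m(3_1)$, $M_4(3_1)$, $M_5(3_1)$, or $M_3(5_1)$). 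Splice these actions by an equivariant connected sum so that the quotient orbifold $N/\langle f\rangle$ is $S^3$ with a single classical knot $k$ as branch locus; this forces $N\cong M_m(k)$. The mapping torus of $f^s$ is then the exterior of $\tau_{m,s}k$, and its fundamental group is $\pi$ by construction (with any ambiguity absorbed into a Gluck reconstruction).

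The main obstacle will be the equivariant assembly step and the identification of the quotient as $S^3$ branched over a \emph{knot} (rather than a link or an exotic singular space). On each summand the finite-order action exists, but one must thread the actions so that the connected-sum spheres are equivariant and descend to unknotted $2$-spheres in the quotient, producing a single branch curve. Conditions~(5) and~(6) are precisely what makes this possible: they eliminate the indecomposable factors that either never appear in an $m$-fold cyclic branched cover of $S^3$ over a knot (namely $\mathbb{Z}$, which would force an $S^2\times S^1$ summand incompatible with connectedness of the branch set, and $Q(8)\times\mathbb{Z}/d\mathbb{Z}$ for $d>1$) or admit no compatible $\mathbb{Z}/m$-action arising from such a branched cover (the finite factors forbidden for the given $m$).
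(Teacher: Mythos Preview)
Your necessity argument matches the paper's. For sufficiency your overall scheme is correct---realize $\pi'$ as $\pi_1$ of a closed 3-manifold $N$, lift conjugation by $t$ to a periodic self-homeomorphism, and invoke Plotnick---but there are two genuine gaps.

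First, Theorem~1 is the wrong tool on the infinite irreducible summands. Theorem~1 (via the results it cites) tells you that a self-homeomorphism of $N$ is determined up to isotopy by its class in $Out(\pi')$; it does \emph{not} say that a finite-order outer automorphism is realized by a \emph{periodic} homeomorphism. That is a Nielsen-realization statement, and it is not how the paper proceeds either.

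Second, and this is the substantive issue, nothing in your argument forces the periodic map to have a nonempty, connected fixed-point set. Without that the quotient $N/\langle f\rangle$ need not be $S^3$ branched over a knot---a~priori the action could be free, or have several fixed circles. You correctly flag this as ``the main obstacle'' but do not resolve it. The paper's route handles both problems at once: it observes that $\pi/\langle t^m\rangle\cong\pi'\rtimes\mathbb{Z}/m\mathbb{Z}$ acts on the universal cover $\widetilde{N}$ (contractible in the aspherical case) and applies Smith theory to conclude that the image of $t$ has a connected nonempty fixed set. This yields a finite-order self-homeomorphism of $N_o=\overline{N\setminus D^3}$, whose mapping torus is a 2-knot exterior with finite-order monodromy; Plotnick's criterion \cite[Proposition~6.1]{Pl86} (together with the Poincar\'e Conjecture) then identifies it as a branched twist spin exterior. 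Your equivariant connected-sum assembly is also unnecessary: the paper simply reduces to indecomposable $\pi'$ first, using $\tau_{m,s}(\Sigma k_i)=\Sigma\tau_{m,s}k_i$, and treats each factor on its own.
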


\begin{proof}
Let $k$ be a 1-knot, and let $N=M_m(k)$ be the $m$-fold
cyclic branched cover of $S^3$, branched over $k$.
If $N$ is aspherical then the monodromy induces an automorphism of 
$\nu=\pi_1(N)$ of order $m$ \cite{CR72}. 
If $\zeta\nu=1$ then $Inn(\nu)\cong\nu$ is torsion-free, 
and so the image of the monodromy in $Out(\nu)=Aut(\nu)/Inn(\nu)$ also has order $m$.
If $k=\Sigma{k_i}$ is composite then $N$ is the connected sum of the cyclic branched covers 
of the summands and $\tau_{m,s}k=\Sigma\tau_{m,s}k_i$ is the corresponding sum.

The necessity of the conditions follows from the above observations.
Suppose that they hold.
Then $\pi'$ is finitely presentable, and so has a finite factorization into indecomposables.
Conditions (3) and (4) allow us to reduce to the case when $\pi'$ is indecomposable.
If $\nu=\pi'$ is finite then suitable twist spins of 2-bridge knots, pretzel knots
or small torus knots give all the possible examples.
If $\pi'$ has one end and $\zeta\pi'\not=1$ then this is essentially
\cite[Theorem 16.5]{FMGK}.
The rest of the proof follows as indicated in the final paragraph of \cite[16.\S3]{FMGK}.
The key point is that the group 
$\pi/\langle{t^m}\rangle\cong\pi'\rtimes\mathbb{Z}/m\mathbb{Z}$ acts on the universal covering space $\widetilde{N}$, and the image of $t$ has 
a connected, non-empty fixed point set, by Smith theory.
Let $c$ be the self-homeomorphism of $N_o=\overline{N\setminus{D^3}}$ 
corresponding to $t$.
Then $MT(c)=X(K)$ for a 2-knot $K$, and $c$ has finite order.
Since the Poincar\'e Conjecture is now known to hold,
$X(K)$ is the exterior of a branched twist spin \cite[Proposition 6.1]{Pl86}.
\end{proof}

\begin{cor}
If $\pi$ is the group of a branched twist spin $\tau_{m,s}k$ and $\pi'$ has 
at least one factor which has one end and trivial centre then $m=[\pi:\pi'.\zeta\pi]$.
\qed
\end{cor}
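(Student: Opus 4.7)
The plan is to identify $n := [\pi:\pi'.\zeta\pi]$ with the smallest positive integer such that some element of $\zeta\pi$ projects to $n$ in $\pi/\pi' \cong \Z$. Since $t^m \in \zeta\pi$ by condition (1) of Theorem 2, we immediately have $n \mid m$. The substance of the corollary is the reverse divisibility $m \mid n$, for which the one-ended, centreless factor $G_i$ of $\pi'$ is the crucial hypothesis.

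I would pick a central element $z \in \zeta\pi$ projecting to $n$, write $z = t^n g$ with $g \in \pi'$, and unpack $zhz^{-1} = h$ for $h \in G_i$. Letting $\theta$ denote the automorphism of $\pi'$ induced by conjugation by $t$, this rearranges to $\theta^n(h) = (g')^{-1} h\, g'$, where $g' := \theta^n(g)$. Since $\theta$ preserves $G_i$ by condition (3), $g'$ normalizes $G_i$ in $\pi'$. The technical heart of the argument --- and the step I expect to be the main obstacle --- is to conclude $g' \in G_i$. This rests on the classical normal-form fact that a non-trivial factor of a free product is its own normalizer, applied to the prime decomposition $\pi' = G_1 \ast \cdots \ast G_r$ provided by condition (2) together with Kneser--Milnor.

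Granting $g' \in G_i$, the restriction $\theta_i := \theta|_{G_i}$ satisfies $\theta_i^n = c_x$, the inner automorphism of $G_i$ given by $c_x(h) = xhx^{-1}$, where $x := (g')^{-1} \in G_i$. Condition (4) of Theorem 2 gives $\theta_i^m = 1$; since $n \mid m$ we have $c_{x^{m/n}} = (c_x)^{m/n} = \theta_i^m = 1$, so $x^{m/n} \in \zeta G_i = 1$. Since $G_i$ is torsion-free --- being one-ended and indecomposable, it is the fundamental group of an aspherical closed 3-manifold, as recalled in Section 1 --- this forces $x = 1$, hence $\theta_i^n = 1$. Combined with $\theta_i$ having order exactly $m$, this gives $m \mid n$, and therefore $n = m$.
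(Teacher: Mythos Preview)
Your argument is correct and is precisely the unpacking the paper has in mind: the corollary is marked \qed\ because the essential point---that when $\zeta G_i=1$ the monodromy has order $m$ in $Out(G_i)$, since $Inn(G_i)\cong G_i$ is torsion-free---already appears in the proof of Theorem~2. Your treatment of the free-product normalizer step (needed to pass from $\theta^n$ being inner on $\pi'$ to $\theta_i^n$ being inner on $G_i$) is the one detail the paper leaves entirely to the reader, and you handle it correctly.
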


If $d>1$ then $\nu_d=Q(8)\times\mathbb{Z}/d\mathbb{Z}$ has an automorphism $\theta$
of order 6 such that $\nu_d\rtimes_\theta\mathbb{Z}$ is the group of a fibred 2-knot,
but no self-homeomorphism of $S^3/\nu_d$ of order 6 has a fixed point.
Hence $\nu_d$ is not the commutator subgroup of a twist spun knot.
There are also fibred 2-knots with knot manifolds whose groups have centre $\mathbb{Z}^2$ but which do not satisfy condition (1) \cite[page 318]{FMGK}.
Thus this condition is not a consequence of the outer automorphism having finite order.

Let $N$ be a homology 3-sphere obtained by Dehn surgery on a knot $k$.
Then $\nu=\pi_1(N)$ has weight one, and so does $\pi=\nu\times\mathbb{Z}$.
Surgery on a loop in $N\times{S^1}$ representing a weight class
gives a homotopy 4-sphere, and the cocore of the surgery 
is a 2-knot with group $\nu\times\mathbb{Z}$.
If $k$ is a simple non-torus knot (such as $4_1$) then for all but finitely many Dehn 
surgeries $N$ is hyperbolic.
The manifold $M=N\times{S^1}$ is then a 2-knot manifold which is a 
$\mathbb{H}^3\times\mathbb{E}^1$-manifold.
However $\pi$ is not isomorphic to $\pi\tau_{m,s}k$ for any knot $k$,
since $[\pi:\pi'.\zeta\pi]=1$, and 1-twist spins are trivial.
(This answers the question following \cite[Corollary 17.11.1]{FMGK}.)

In such cases the outer automorphism determined by the monodromy has finite order, 
but does not lift to an automorphism of finite order.
Thus we cannot use Smith theory to show that the monodromy of the knot exterior 
has finite order, which is an essential hypotheses of \cite[Proposition 6.1]{Pl86}.

Condition (2) is unsatisfactory as part of an algebraic characterization of such groups.
If $K$ is a 2-knot such that $\pi'=\pi{K}'$ is finitely presentable 
then $M(K)'$ is a $PD_3$-complex \cite[Theorem 4.5]{FMGK}.
Hence the indecomposable factors of $\pi'$ are either $PD_3$-groups or are virtually free.
Consider the assertion

\smallskip
\noindent{\it $PD_3$-groups are fundamental groups of 
aspherical closed $3$-manifolds.}

\smallskip
\noindent{}If this holds then a 2-knot group is the group of a fibred 2-knot if and only if 
$\pi'$ is finitely generated and $Q(8)$ is not a subgroup of any infinite 
indecomposable factor of $\pi'$ \cite[Theorem 3.1]{Hi12}.
We may then replace condition (2) by the condition

\smallskip
\noindent{\it $Q(8)$ is not a subgroup of any infinite 
indecomposable factor of $\pi'$}.

\smallskip
\noindent{}In particular,
if $\pi'$ is indecomposable and infinite the conditions in Theorem 1 could 
be replaced by 

\smallskip
\noindent{\it $\pi$ is a $PD_4$-group and conjugation by a weight element $t$ 
induces an automorphism of $\pi'$ of order $m$}.

\smallskip
We may also use \cite[Theorem 4.5]{FMGK} to add a fifth characterization 
of knot groups with free commutator subgroup to \cite[Theorem 2.1]{Hi12}:
{\it If $\pi$ is a $2$-knot group, $c.d.\pi=2$ and $\pi'$ is finitely generated 
then $\pi'$ is free.}
For if $M(K)'$ is a $PD_3$-complex and $c.d.\pi'\leq2$ then $\pi'$ must be free.
However we do not know whether ``$\pi$ is a 2-knot group" is necessary
(in the context of higher-dimensional knot groups).

\begin{theorem}
The knot manifold $M(\tau_{m,s}k)$ has a geometric decomposition if and only if $k$ is 
a prime knot.
\end{theorem}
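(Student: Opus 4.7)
The plan is a case analysis based on the classification of $N = M_m(k)$ recalled in Section~1. Since $(s,m)=1$, the $s$-th power $c^s$ of a deck generator $c$ of the cover $N \to S^3$ still has order $m$, and $M(\tau_{m,s}k) \cong MT(c^s)$.

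For the forward direction, if $k$ is prime I would isotope $c^s$ to an isometry or to a JSJ-preserving self-homeomorphism of $N$. The rigidity theorems already invoked in the proof of Theorem~1---Mostow in the hyperbolic case, \cite{Sc85,BO91} in the Seifert fibred cases, \cite{McC02} for sphere quotients, together with an elementary argument for the Hantzsche--Wendt manifold---let me replace $c^s$ by an isometry when $N$ is geometric, whence $MT(c^s)$ carries the product geometry $\mathcal{G}\times\mathbb{E}^1$ associated with the geometry $\mathcal{G}$ of $N$ (namely $\mathbb{H}^3\times\mathbb{E}^1$, $\widetilde{\mathbb{SL}}\times\mathbb{E}^1$, $\mathbb{S}^3\times\mathbb{E}^1$, or $\mathbb{E}^4$). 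If $k$ is a prime satellite then $N$ is Haken with non-trivial JSJ decomposition; by \cite{Zn86}, $c^s$ is isotopic to a homeomorphism that permutes the JSJ tori, and cutting $MT(c^s)$ along the $c^s$-orbits of these tori (thickened to $T^2\times[0,1]$) gives closed 4-manifolds, one per orbit, each a mapping torus of a geometric self-homeomorphism of a geometric 3-manifold with toral boundary, and hence geometric.

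For the converse, suppose $k = k_1 \# k_2$ with both $k_i$ non-trivial. Then $N = M_m(k_1) \# M_m(k_2)$ and $\pi' \cong G_1 \ast G_2$ with both $G_i$ non-trivial. The essential 2-spheres separating the summands force $\pi_2(N)\neq 0$, and hence $\pi_2(M(\tau_{m,s}k))\neq 0$. A closed 4-manifold with a geometric decomposition in the sense of this paper is aspherical unless one of its pieces carries a non-aspherical product geometry such as $\mathbb{S}^2\times\mathbb{E}^2$, $\mathbb{S}^2\times\mathbb{H}^2$, or $\mathbb{S}^3\times\mathbb{E}^1$; the fundamental groups of such pieces have very restricted form (a virtual $PD_2$-group, or a finite-by-$\mathbb{Z}$ group), and the way they may be glued to aspherical pieces along aspherical 3-manifold hypersurfaces cannot produce the semidirect product $(G_1\ast G_2)\rtimes\mathbb{Z}$, whose commutator subgroup is a non-trivial free product.

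The main obstacle is this converse: verifying, geometry by geometry, that no mixed geometric decomposition is consistent with the free-product structure of $\pi'$, or equivalently that the essential 2-sphere in $N$ surviving into $MT(c^s)$ cannot be absorbed into any closed 4-dimensional geometric piece glued along aspherical hypersurfaces. The forward direction, by contrast, reduces to a routine application of known rigidity theorems together with the observation that both the mapping-torus-of-isometry and JSJ-cutting constructions respect geometric structures.
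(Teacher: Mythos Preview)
Your forward direction is essentially the paper's argument, though you make it slightly harder than necessary: since $c$ is already a finite-order homeomorphism (a deck transformation of a branched covering), the equivariant JSJ and geometrization of finite group actions give the decomposition directly, without first invoking ``homotopy implies isotopy'' rigidity. Your remark that cutting along the JSJ tori gives \emph{closed} $4$-manifolds is a slip; the pieces have $T^3$ boundary.

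The real difference is in the converse, and here you have correctly diagnosed your own gap. The paper avoids your proposed geometry-by-geometry analysis entirely by first observing that $M(\tau_{m,s}k)$ is a mapping torus, so $\chi(M(\tau_{m,s}k))=0$, and then invoking \cite[Theorem 7.2]{FMGK}: a closed orientable $4$-manifold with $\chi=0$ and a geometric decomposition is either an $\mathbb{S}^2\times\mathbb{E}^2$-manifold, an $\mathbb{S}^3\times\mathbb{E}^1$-manifold, or aspherical. The first two are ruled out because $\pi$ is not virtually abelian (its commutator subgroup is a non-trivial free product), and asphericity is ruled out because $\pi_2\not=0$. This is a two-line argument once the cited theorem is in hand. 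Your inclusion of $\mathbb{S}^2\times\mathbb{H}^2$ among the candidate non-aspherical pieces shows you had not yet exploited the Euler-characteristic constraint, which is what makes the classification so short; without it, the mixed-geometry analysis you sketch is genuinely laborious and essentially amounts to reproving \cite[Theorem 7.2]{FMGK}.
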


\begin{proof}
A closed orientable 4-manifold $M$ with a geometric decomposition and $\chi(M)=0$ is 
either a $\mathbb{S}^2\times\mathbb{E}^2$- or $\mathbb{S}^3\times\mathbb{E}^1$-manifold,
or is aspherical \cite[Theorem 7.2]{FMGK}.

If $k$ is prime and $M_m(k)$ is covered by $S^3$ then $M(\tau_{m,s}k)$
is a $\mathbb{S}^3\times\mathbb{E}^1$-manifold.
If $M_m(k)$ is aspherical then it has a JSJ decomposition which is equivariant 
with respect to the action of the monodromy. 
Since the (closed) monodromy has finite order the pieces of the decomposition of $M_m(k)$
give rise to a decomposition of $M(\tau_{m,s}k)$ with the corresponding product geometry
$\mathbb{H}^3\times\mathbb{E}^1$, $\mathbb{H}^2\times\mathbb{E}^2$
or $\widetilde{\mathbb{SL}}\times\mathbb{E}^1$.

If $k$ is composite then $\pi'$ is a proper free product and $\pi_2(M_m(k))\not=0$.
Hence $\pi\tau_{m,s}k$ is not virtually abelian and $M(\tau_{m,s}k)$ is not aspherical,
so $M(\tau_{m,s}k)$ has no geometric decomposition.
\end{proof}

It can be shown that finite volume $\mathbb{H}^2\times\mathbb{E}^1$-manifolds
which are not closed are also $\widetilde{\mathbb{SL}}$-manifolds,
and so proper pieces of type $\mathbb{H}^2\times\mathbb{E}^2$ are also of type 
$\widetilde{\mathbb{SL}}\times\mathbb{E}^1$.

\begin{cor}
If $K$ is a $2$-knot with $\pi{K}\cong\pi\tau_{m,s}k$, 
where $k$ is a prime knot and $M_m(k)$ is aspherical,
then $M(K)$ is $s$-cobordant to $M(\tau_{m,s}k)$.
\end{cor}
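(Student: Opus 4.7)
The plan is to show that $M(K)$ and $M(\tau_{m,s}k)$ are homotopy equivalent aspherical closed $4$-manifolds, and then promote this homotopy equivalence to an $s$-cobordism via the topological surgery exact sequence.

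First I invoke Theorem~4: since $k$ is prime and $M_m(k)$ is aspherical, $M(\tau_{m,s}k)$ admits a geometric decomposition whose pieces are of type $\mathbb{H}^3\times\mathbb{E}^1$, $\mathbb{H}^2\times\mathbb{E}^2$, or $\widetilde{\mathbb{SL}}\times\mathbb{E}^1$, all aspherical. Hence $M(\tau_{m,s}k)$ is itself aspherical, $\pi=\pi\tau_{m,s}k\cong\pi{K}$ is a $PD_4$-group, and $\chi(\pi)=\chi(M(\tau_{m,s}k))=0$. Now $M(K)$ is a closed orientable $4$-manifold with $\pi_1(M(K))\cong\pi$ and $\chi(M(K))=0=\chi(\pi)$, so the Euler-characteristic criterion for asphericity of a closed orientable $PD_4$-complex whose fundamental group is a $PD_4$-group (see \cite[Chapter~3]{FMGK}) forces $M(K)$ to be aspherical as well. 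Both manifolds are therefore models for $K(\pi,1)$, and the classifying map yields a homotopy equivalence $f\colon M(K)\to M(\tau_{m,s}k)$.

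To upgrade $f$ to an $s$-cobordism I appeal to the topological surgery exact sequence at $\pi$. The group $\pi$ is a semidirect product of $\mathbb{Z}$ with an aspherical closed $3$-manifold group, a class for which $Wh(\pi)$ vanishes and the $L$-theoretic assembly map is sufficiently well understood; this is the surgery rigidity input underlying the ``$s$-cobordism'' statements already referred to in the introduction and developed in \cite{FMGK}. Consequently the structure set $\mathcal{S}^s_{TOP}(M(\tau_{m,s}k))$ is trivial, so $f$ is $s$-cobordant to a homeomorphism, giving the desired $s$-cobordism from $M(K)$ to $M(\tau_{m,s}k)$.

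The main obstacle is the final step: confirming in each of the three geometric types (and for pieces of mixed type in the decomposition) that the relevant structure set is trivial. I expect this to be a direct appeal to the $s$-cobordism rigidity theorems of \cite{FMGK} rather than requiring new surgery-theoretic computation, since the fundamental groups in question are precisely those for which that theory was designed.
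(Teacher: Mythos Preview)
Your strategy---show both manifolds are aspherical with the same $\pi_1$, hence homotopy equivalent, then invoke the $s$-cobordism rigidity machinery of \cite{FMGK}---is exactly the paper's approach, which simply cites \cite[Theorem~9.12]{FMGK} and \cite[Theorem~3.4]{Hi12}.

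One point deserves correction. You appeal to the 4-dimensional topological surgery exact sequence and claim the structure set $\mathcal{S}^s_{TOP}(M(\tau_{m,s}k))$ is trivial. That sequence is only known to be exact in dimension~4 for \emph{good} fundamental groups in Freedman's sense, and the $PD_4$-groups arising here (extensions of $\mathbb{Z}$ by hyperbolic or Haken 3-manifold groups) are not known to be good. This is why the paper's proof stresses $Wh(\pi\times\mathbb{Z})=0$ rather than $Wh(\pi)=0$: the argument behind \cite[Theorem~9.12]{FMGK} proceeds by crossing with $S^1$ and working in dimension~5, where surgery is unobstructed, and then descending to a 5-dimensional $s$-cobordism between the original 4-manifolds. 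Your final deference to the rigidity theorems of \cite{FMGK} lands you in the right place, but the framing via the 4-dimensional structure set is not the correct mechanism.
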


\begin{proof}
This follows from \cite[Theorem 9.12]{FMGK}
and \cite[Theorem 3.4]{Hi12}, 
since $Wh(\pi\times\mathbb{Z})=0$,
as explained in \cite[9.\S6]{FMGK}.
\end{proof}

\section{uniqueness}

In this section we shall summarize the results of Fukuda and Ishikawa, 
and complete their treatment of the torus knot case.
We shall also show that a 2-knot group can be the group of only finitely 
many branched twist spins.

We begin with a simple argument for the torus-knot case.

\begin{theorem}
Let $\pi$ be a $2$-knot group such that $\zeta\pi\cong\mathbb{Z}^2$.
Then $\pi$ is the group of at most finitely many branched twist spins.
\end{theorem}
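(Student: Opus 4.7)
The plan is to decompose the proof into three stages: first, use the hypothesis $\zeta\pi \cong \mathbb{Z}^2$ to force the underlying classical knot $k$ to be a nontrivial torus knot $k_{p,q}$; second, recover the unordered Brieskorn triple $\{m,p,q\}$ from $\pi$ as a group-theoretic invariant; and third, bound the parameter $s$ by $\varphi(m)$ for each such triple. Combining the three bounds yields a uniform finite count.

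First I would write any branched twist spin group as $\pi = \pi' \rtimes \mathbb{Z}$ and note that $\zeta\pi \cap \pi'$ is central in $\pi'$, while $\zeta\pi/(\zeta\pi \cap \pi')$ embeds in $\pi/\pi' \cong \mathbb{Z}$. The hypothesis $\zeta\pi \cong \mathbb{Z}^2$ therefore forces $\zeta\pi'$ to have rank at least one, and in particular $\pi'$ to be infinite. Using the classification of $M_m(k)$ recalled in the introduction, this eliminates: the hyperbolic cases (trivial centre); the satellite cases (nontrivial graph-manifold groups with trivial centre); the Hantzsche-Wendt case $(k,m) = (4_1, 3)$ (Bieberbach group with trivial centre); the finite-$\pi'$ exceptional cases (where $\zeta\pi$ reduces to a cyclic group); and the composite cases, where $\pi'$ is a nontrivial free product, hence centreless. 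The only possibility left is $k = k_{p,q}$ a nontrivial torus knot with $\pi' = \pi_1(M(m,p,q))$ infinite.

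Next I would invoke the Seifert fibration of $M(\tau_{m,s}k_{p,q})$ over the orbifold $S^2(m,p,q)$ with torus fibre, already described in the paper. Since $M(m,p,q)$ is an $\widetilde{\mathbb{SL}}$-manifold (so $1/m + 1/p + 1/q < 1$), this gives a central extension
\[
1 \to \mathbb{Z}^2 \to \pi \to \Delta(m,p,q) \to 1,
\]
where $\Delta(m,p,q) = \langle x, y, z \mid x^m = y^p = z^q = xyz = 1 \rangle$ is the associated hyperbolic triangle group. The image of $\mathbb{Z}^2$ sits inside $\zeta\pi$, and the hypothesis $\zeta\pi \cong \mathbb{Z}^2$ forces equality by rank, so $\pi/\zeta\pi \cong \Delta(m,p,q)$. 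Hyperbolic triangle groups are pairwise non-isomorphic for distinct unordered triples, so $\{m,p,q\}$ is an invariant of $\pi$.

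Finally, for each such triple there are at most three choices for which entry is the branching order $m$ (the remaining two giving the torus knot $k_{p,q}$), and for each such choice at most $\varphi(m)$ values of $s$ modulo $m$. This bounds the number of branched twist spins with group $\pi$ by $3\max\{\varphi(m),\varphi(p),\varphi(q)\}$, which is finite. The hard part will be the first step: verifying rigorously that every case in the classification of $M_m(k)$ recalled in the introduction either has $\zeta\pi_1$ of rank $0$ or produces a torus knot. The key facts to nail down are that graph manifolds from nontrivial JSJ decompositions have trivial centre, and that the Hantzsche-Wendt Bieberbach group has trivial centre—both standard but essential for the reduction to the torus knot case.
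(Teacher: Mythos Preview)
Your three-stage plan is exactly the paper's argument: reduce to torus knots, identify $\pi/\zeta\pi$ with the orbifold group $\pi^{orb}(S^2(m,p,q))=\Delta(m,p,q)$ to recover the unordered triple, then bound $s$ by $\varphi(m)$. The paper's own proof is a single terse paragraph that simply asserts the first step, so you are supplying more detail than the original, not less.

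There is, however, one omission in your case analysis. The classification of $M_m(k)$ you cite from the introduction is stated only for prime $k$ with $m\geq3$; for $m=2$ the picture is different. When $k$ is a Montesinos knot that is not a torus knot, $M_2(k)$ is Seifert fibred with infinite fundamental group and $\zeta\pi'\cong\mathbb{Z}$, so your rank argument on $\zeta\pi\cap\pi'$ does \emph{not} eliminate it. What actually rules this case out is the paper's remark that the covering involution acts nontrivially on the Seifert fibre: the monodromy $\theta$ inverts the generator of $\zeta\pi'$, hence is not inner (inner automorphisms fix the centre pointwise), and a direct check then gives $\zeta\pi\cong\mathbb{Z}$, not $\mathbb{Z}^2$. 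You should add this to your list of excluded cases rather than bundling it under ``hyperbolic'' or ``satellite''. A smaller point in the same spirit: in stage two you claim $M(m,p,q)$ is always an $\widetilde{\mathbb{SL}}$-manifold, but the boundary case $(m,p,q)=(6,2,3)$ is Euclidean (geometry $\mathrm{Nil}$) and still satisfies $\zeta\pi\cong\mathbb{Z}^2$; the identification $\pi/\zeta\pi\cong\Delta(m,p,q)$ and the recovery of the triple go through unchanged there, so this is only a wording issue.
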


\begin{proof}
If $\pi=\pi\tau_{m,s}k$ then $k$ is a torus knot $k_{p,q}$,
and $M(\tau_{m,s}k)$ is Seifert fibred over $S^2(m,p,q)$.
Hence $\pi/\zeta\pi\cong\pi^{orb}(S^2(m,p,q))$, and so
$\pi$ determines the triple $\{m,p,q\}$.
Thus there are at most 3 possibilities for $m$.
For each value of $m$ there are at most $\varphi(m)$ distinct branched twist spins.
\end{proof}

\begin{cor}
\label{torus knot}
The torus knot $k_{p,q}$ is determined by $\pi$ and $m$.
\qed
\end{cor}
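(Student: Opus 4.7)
The plan is to leverage the identification $\pi/\zeta\pi \cong \pi^{orb}(S^2(m,p,q))$ established in the preceding theorem, and to observe that all of the needed data can be read off from this quotient together with the given $m$.

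First, I would argue that the orbifold fundamental group of $S^2(m,p,q)$ determines the unordered multiset $\{m,p,q\}$. Writing the group with the standard presentation $\langle x_1,x_2,x_3 \mid x_1^m = x_2^p = x_3^q = x_1x_2x_3 = 1\rangle$, one checks that the maximal finite cyclic subgroups, up to conjugacy, are precisely those generated by $x_1$, $x_2$, $x_3$, of orders $m$, $p$, $q$ respectively. This is a standard structural fact about Fuchsian triangle groups (and an easy direct check in the spherical and Euclidean cases), and it is the only step of the argument that requires any real verification.

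Second, with $m$ given as an additional input, I would extract the unordered pair $\{p,q\}$ by deleting one copy of $m$ from the multiset $\{m,p,q\}$; this multiset operation is unambiguous even when $m$ coincides with $p$ or $q$. Since $(p,q)=1$ and $p,q\geq 2$, and since $k_{p,q}\cong k_{q,p}$ as classical knots, the unordered pair $\{p,q\}$ determines $k_{p,q}$. The corollary is thus essentially a bookkeeping consequence of the preceding theorem, and I expect no obstacle beyond the orbifold-group recognition step above.
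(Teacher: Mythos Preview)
Your proposal is correct and matches the paper's intended argument: the paper marks the corollary with \qed, treating it as immediate from the assertion in Theorem~6 that $\pi/\zeta\pi\cong\pi^{orb}(S^2(m,p,q))$ determines the multiset $\{m,p,q\}$, whence knowing $m$ pins down the pair $\{p,q\}$ and hence $k_{p,q}$. You have simply spelled out the bookkeeping that the paper leaves implicit.
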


Let $m,p,q>1$ be pairwise relatively prime integers.
Then $M(m,p,q)$ is a homology 3-sphere, 
and  $M_m(k_{p,q})\cong{M_p(k_{m,q})}\cong{M_q(k_{m,p})}\cong{M(m,p,q)}$.
If $\nu=\pi_1(M(m,p,q))$ the  the corresponding twist spins all have 
$\pi\cong\nu\times\mathbb{Z}$.
Hence $\pi=\pi'.\zeta\pi$, and $m>[\pi:\pi'.\zeta\pi]=1$.
Thus the order of the twisting $m$ is not determined by the group.
However, the order of the twisting must be one of $m,p$ or $q$.
(Similarly, if $(m,q)=(p,q)=1$ but $(m,p)>1$ then the order of the twisting is $m$ or $q$.
For example, $\pi\tau_4k_{2,3}\cong\pi\tau_2k_{3,4}\cong{T^*_1}\rtimes\mathbb{Z}$.)

Fukuda and Ishikawa give a stronger result for branched twist spins
of the other prime knots.
(They also use knot determinants to prove Corollary \ref{torus knot} above,
with some restrictions on the parameters $p,q$.)

\begin{thm}
[Fukuda--Ishikawa \cite{FI23}]
Let $k_1$ and $k_2$ be two prime knots, and suppose that 
$\pi\tau_{m,s}k_1\cong\pi\tau_{m,s}k_2$ for some $m,s$ with $(m,s)=1$.
If $m=2$ and $M_2(k_1)$ is Haken or if $m\geq3$ and $k_1$ is not a torus knot
then $k_1\cong{k_2}$,
and so $\tau_{m,s}k_1\cong\tau_{m,s}k_2$.
\qed
\end{thm}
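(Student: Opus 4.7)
The plan is to use equivariant rigidity for aspherical $3$-manifolds to lift the given group isomorphism to a homeomorphism of branched covers that intertwines the deck actions, and then to pass to the quotient.

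First, I restrict the hypothesized isomorphism $\phi:\pi\tau_{m,s}k_1\to\pi\tau_{m,s}k_2$ to an isomorphism $\phi':\pi_1(M_m(k_1))\to\pi_1(M_m(k_2))$ of commutator subgroups. By the Plotnick criterion recalled in Section 1, $\phi'$ intertwines, up to conjugacy and inversion, the outer automorphisms induced by conjugation by meridians, and since $(m,s)=1$ these are exactly the outer classes of the deck transformations of the branched coverings $M_m(k_i)\to S^3$. Under the stated hypotheses each $M_m(k_i)$ is an aspherical closed orientable $3$-manifold: Haken when $m=2$ (by assumption) or when $k_1$ is a satellite, hyperbolic when $k_1$ is a simple non-torus knot and $m\geq 3$ (apart from the pair $(4_1,3)$), or the Hantzsche--Wendt flat manifold in that one exceptional case. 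Applying the rigidity results gathered in the proof of Theorem 1 (Mostow, Waldhausen--Johannson, or Bieberbach as appropriate) then promotes $\phi'$ to a homeomorphism $h:M_m(k_1)\to M_m(k_2)$, and supplies the complementary fact that homotopic self-homeomorphisms of $M_m(k_i)$ are isotopic.

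Next, I arrange $h$ to be equivariant with respect to the two $\Z/m\Z$ deck actions $\tau_1,\tau_2$. The conjugate $h\tau_1 h^{-1}$ is a self-homeomorphism of $M_m(k_2)$ of order $m$ whose outer class in $Out(\pi_1(M_m(k_2)))$ agrees with that of $\tau_2$, hence is homotopic and therefore isotopic to $\tau_2$; an ambient isotopy of $h$ converts this into genuine equivariance. Passing to the quotient then yields a homeomorphism $\bar h:S^3\to S^3$ carrying the branch locus $k_1$ onto $k_2$, whence $k_1\cong k_2$ and consequently $\tau_{m,s}k_1\cong\tau_{m,s}k_2$.

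The principal obstacle is the equivariance step. For hyperbolic $M_m(k_i)$ it is immediate from the uniqueness clause of Mostow rigidity. In the Haken satellite case it requires an equivariant version of Waldhausen's theorem, handled piece-by-piece along the JSJ decomposition (which is preserved by the deck action, as in the proof of Theorem 4), together with the fact that every finite-order outer automorphism of a Haken $3$-manifold group lifts to a genuine finite-order self-homeomorphism. The exclusion of torus knots for $m\geq 3$ and of $2$-bridge knots for $m=2$ is essential, because in those situations $M_m(k)$ admits inequivalent finite-order actions whose quotients have non-equivalent branch loci, so that the integer $m$ and the branched-cover structure are not recoverable from $\pi$ alone, as noted in the discussion following Theorem 4.
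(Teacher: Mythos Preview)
The paper does not prove this theorem; it is quoted from \cite{FI23} with a \qed, and the paragraph that follows merely sketches the argument: one passes to the quotient $\pi/\zeta\pi$, identifies it with the orbifold fundamental group of $(S^3,k_i)$ with cone angle $2\pi/m$ along $k_i$, and then invokes rigidity for hyperbolic or Haken 3-orbifolds to conclude that the orbifolds, and hence the branch loci, coincide.

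Your route is different: you stay upstairs on the branched covers $M_m(k_i)$, realise the isomorphism of commutator subgroups by a homeomorphism $h$, and then try to make $h$ equivariant so as to descend. The first two steps are fine. The gap is the equivariance step. The sentence ``an ambient isotopy of $h$ converts this into genuine equivariance'' is not justified: knowing that $h\tau_1h^{-1}$ is \emph{isotopic} to $\tau_2$ does not let you isotope $h$ so that $h\tau_1h^{-1}=\tau_2$. What you actually need is that two finite cyclic actions on $M_m(k_2)$ inducing the same (cyclic) subgroup of $Out(\pi_1)$ are topologically conjugate. For hyperbolic $M_m(k_i)$ this follows from Mostow rigidity as you say, but in the Haken case it is precisely the content of equivariant rigidity, equivalently of orbifold rigidity for the quotient orbifolds $M_m(k_i)/(\mathbb{Z}/m\mathbb{Z})$. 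So your ``principal obstacle'' is not a technical wrinkle to be patched by isotopy; it is the substantive input, and once you grant it you have essentially reconstructed the orbifold argument of \cite{FI23} from above. The paper's formulation via $\pi/\zeta\pi$ reaches the same destination more directly, without the detour through realising $h$ and then repairing its equivariance.
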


The key points are that $k_1$ and $k_2$ are either hyperbolic or satellites, 
and the quotient $\pi/\zeta\pi$ is the orbifold fundamental group of 3-orbifolds 
corresponding to the branched coverings of $(S^3,k_1)$ and $(S^3,k_2)$.
These orbifolds are hyperbolic or Haken, 
and therefore are determined by their fundamental groups.
The argument of \cite{FI23} shows in fact that if  $(m,s_1)=(m,s_2)=1$ and
$\pi\tau_{m,s_1}k_1\cong\pi\tau_{m,s_2}k_2$ then $k_1\cong{k_2}$. 
However more may be needed to conclude that $\tau_{m,s_1}k_1\cong\tau_{m,s_2}k_2$.

\begin{theorem}
Let $k$ be a knot and let $\theta$ be the automorphism of $\nu=\pi_1(M_m(k))$ 
induced by the canonical generator of the branched covering,
for some $m>1$.
Let $s_1,s_2$ be integers such that $(m,s_1)=(m,s_2)=1$.
If $\tau_{m,s_1}k\cong\tau_{m,s_2}k$ then $\theta^{s_1}$ is conjugate in $Aut(\nu)$
to $\theta^{s_2}$ or $\theta^{-s^2}$.
Conversely, if $\theta^{s_1}$ is conjugate in $Aut(\nu)$
to $\theta^{s_2}$ or $\theta^{-s^2}$
then $\pi\tau_{m,s_2}k\cong\pi\tau_{m,s_1}k$ and the meridians 
of $\tau_{m,s_1}k$ and $\tau_{m,s_2}k$ are in the same weight orbit,
up to inversion.
\end{theorem}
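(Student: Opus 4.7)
The plan is to reformulate the statement in terms of the semidirect product presentation of each knot group and then to track the meridian carefully through an isomorphism; write $t_i$ for a meridianal generator of $\pi\tau_{m,s_i}k\cong\nu\rtimes_{\theta^{s_i}}\mathbb{Z}$. For the forward direction, I begin with a homeomorphism of pairs $h\colon(S^4,\tau_{m,s_1}k)\to(S^4,\tau_{m,s_2}k)$ realizing the equivalence. This restricts to a homeomorphism of exteriors and, on the fundamental group, gives an isomorphism
\[
\phi\colon\nu\rtimes_{\theta^{s_1}}\mathbb{Z}\longrightarrow\nu\rtimes_{\theta^{s_2}}\mathbb{Z}
\]
carrying the meridian conjugacy class of $\tau_{m,s_1}k$ to that of $\tau_{m,s_2}k$, up to inversion (to accommodate possible orientation reversal by $h$). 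Hence $\phi(t_1)$ is conjugate in the target to $t_2^{\epsilon}$ for some $\epsilon=\pm 1$. After composing $\phi$ with the inner automorphism of $\nu\rtimes_{\theta^{s_2}}\mathbb{Z}$ that conjugates $\phi(t_1)$ onto $t_2^\epsilon$, I may assume $\phi(t_1)=t_2^\epsilon$ outright. Since $\nu=\pi'$ is a characteristic subgroup, $\phi$ restricts to an automorphism $d\in Aut(\nu)$. Applying $\phi$ to the defining relation $t_1xt_1^{-1}=\theta^{s_1}(x)$ for $x\in\nu$ gives $\theta^{\epsilon s_2}(d(x))=d(\theta^{s_1}(x))$, i.e., $d\,\theta^{s_1}\,d^{-1}=\theta^{\epsilon s_2}$, which is the required conjugacy in $Aut(\nu)$.

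For the converse, given $\alpha\in Aut(\nu)$ and $\epsilon\in\{\pm 1\}$ with $\alpha\theta^{s_1}\alpha^{-1}=\theta^{\epsilon s_2}$, I define $\phi$ by $\phi|_\nu=\alpha$ and $\phi(t_1)=t_2^\epsilon$. The conjugacy relation is exactly the compatibility needed to extend $\alpha$ to a group homomorphism $\nu\rtimes_{\theta^{s_1}}\mathbb{Z}\to\nu\rtimes_{\theta^{s_2}}\mathbb{Z}$, and bijectivity is immediate from the analogous formula for the inverse. So $\pi\tau_{m,s_1}k\cong\pi\tau_{m,s_2}k$, and since $\phi$ sends the meridianal generator $t_1$ to $t_2^{\pm 1}$, it takes the weight orbit of $t_1$ onto the weight orbit of $t_2$ or of its inverse, as claimed.

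The main obstacle is the gap between the Plotnick criterion recalled in the preliminary section---which yields only $Out(\nu)$-conjugacy of the monodromies from an abstract group isomorphism---and the sharper $Aut(\nu)$-conjugacy demanded in the statement. Bridging this gap is where the hypothesis that the isomorphism comes from a knot equivalence, and therefore respects the meridianal conjugacy class, does genuine work: absorbing an inner automorphism of the target so that $\phi(t_1)=t_2^\epsilon$ on the nose eliminates an otherwise inevitable inner factor $c_u$, without which the argument would produce only the weaker $Out(\nu)$-statement.
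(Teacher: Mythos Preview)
Your proof is correct and follows essentially the same route as the paper's: both arguments pass from the homeomorphism to a group isomorphism, adjust by an inner automorphism so that the meridian maps to $t_2^{\pm1}$, and then read off the $Aut(\nu)$-conjugacy from the semidirect-product relation (with the converse given by the obvious extension of $\alpha$). The only cosmetic difference is that you treat the two signs uniformly via $\epsilon$, whereas the paper handles the orientation-preserving case first and then remarks that the reversing case is similar.
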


\begin{proof}
The automorphism of $\nu$ induced by the canonical meridian 
for $\tau_{m,s}k$ is $\theta^s$.
Let $t_1$ and $t_2$ be the canonical weight elements for $\tau_{m,s_1}k$ and 
$\tau_{m,s_2}k$.
A homeomorphism of $S^4$ which carries $\tau_{m,s_1}k$ to
$\tau_{m,s_2}k$ and preserves the homology class of the meridians 
induces an isomorphism $f_*:\pi\tau_{m,s_1}k\to\pi\tau_{m,s_2}k$ 
such that $f_*(t_1)=gt_2g^{-1}$, 
for some $g\in\nu$,
and $f_*(\theta^{s_1}(x))=g\theta^{s_2}(f_*(x))g^{-1}$, for all $x\in\nu$.
Hence $d=c_g^{-1}f_*|_\nu$ is an automorphism of $\nu$ 
such that $d\theta^{s_1}d^{-1}=\theta^{s_2}$.

Conversely, 
if $d\theta^{s_1}d^{-1}=\theta^{s_2}$ for some automorphism $d$ of $\nu$ then
setting $D(x)=d(x)$ for $x\in\nu$ and $d(t_1)=t_2$
defines an isomorphism $D:\pi\tau_{m,s_1}k_1\to\pi\tau_{m,s_2}k_2$
which carries $t_1$ to $t_2$. 
Thus $D$ carries the weight orbit of $t_1$ onto the weight orbit of $t_2$.
(Note that since $\theta^{s_1}$ and $\theta^{s_2}$ obviously commute,
the criterion  of \S1 for comparison of the weight orbits reduces to
requiring that $\theta^{s_1}$ be conjugate to $\theta^{s_2}$ in $Aut(\pi')$.)

The above arguments are easily adapted to the cases when we allow the
homeomorphism carrying the image of one knot onto the other to reverse the homology class of the meridians.
\end{proof}

We know from Theorem 1 that if $k$ is prime then under the assumptions
of the theorem $M(\tau_{m,s_1}k)\cong{M(\tau_{m,s_2}k)}$.
There is a stronger result when $k$ is a torus knot and $\nu$ is not virtually solvable.

\begin{cor}
If $k=k_{p,q}$ where $\frac1m+\frac1p+\frac1q<1$
then $X(\tau_{m,s_1}k)\cong{X(\tau_{m,s_2}k)}$ if and only if
$\theta^{s_1}$ is conjugate in $Aut(\nu)$
to $\theta^{s_2}$ or $\theta^{-s^2}$.
\end{cor}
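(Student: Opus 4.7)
The plan is to split the two implications and handle them by different means: the forward (``only if'') direction is essentially a restatement of the forward direction of Theorem 10, while the reverse direction requires upgrading the group-theoretic isomorphism produced by Theorem 10 to a homeomorphism of exteriors. The common thread is that the hypothesis $\tfrac1m+\tfrac1p+\tfrac1q<1$ forces the base orbifold $S^2(m,p,q)$ to be hyperbolic, so by Theorem 3 each knot manifold $M(\tau_{m,s_i}k)$ is an aspherical $\widetilde{\mathbb{SL}}\times\mathbb{E}^1$-manifold and $\nu=\pi_1(M_m(k))$ is an $\widetilde{\mathbb{SL}}$-lattice (in particular not virtually solvable), so the full strength of Seifert-fibred rigidity is available.

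For the only-if direction, a homeomorphism $f:X(\tau_{m,s_1}k)\to X(\tau_{m,s_2}k)$ restricts on the boundary torus to a map sending the meridian of $\tau_{m,s_1}k$ to the meridian of $\tau_{m,s_2}k$ up to inversion (the meridian is intrinsically distinguished on $\partial X$ as a generator of the kernel of inclusion into the knot manifold). The induced isomorphism therefore sends $t_1$ to a conjugate of $t_2^{\pm1}$, and the argument in the proof of Theorem 10 then produces $d\in Aut(\nu)$ with $d\theta^{s_1}d^{-1}=\theta^{\pm s_2}$.

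For the if direction, I would first apply Theorem 10 to obtain an isomorphism $D:\pi\tau_{m,s_1}k\to\pi\tau_{m,s_2}k$ with $D|_\nu=d\in Aut(\nu)$ and $D(t_1)=t_2$. The Seifert-fibred rigidity cited in the proof of Theorem 1 (Scott \cite{Sc85}, Boileau--Otal \cite{BO91}) applies to $\widetilde{\mathbb{SL}}\times\mathbb{E}^1$-manifolds of the form $MT(\hat c^s)$, and shows that $D$ is induced by a homeomorphism $h:M(\tau_{m,s_1}k)\to M(\tau_{m,s_2}k)$ preserving the Seifert fibrations. Since $h_*=D$ carries the weight class of $t_1$ to that of $t_2$, one can isotope $h$ so that it sends the meridional curve of $\tau_{m,s_1}k$ to that of $\tau_{m,s_2}k$, and restriction to the complements of tubular neighbourhoods of these curves yields the desired homeomorphism of exteriors.

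The most delicate step is the last one: passing from a homeomorphism realising $D$ to one which carries the specific meridional simple closed curve of $\tau_{m,s_1}k$ onto that of $\tau_{m,s_2}k$. Because $\zeta\pi\cong\mathbb{Z}^2$ under our hypothesis and $t_i$ projects to the $\mathbb{E}^1$-factor of the geometric structure on $M(\tau_{m,s_i}k)$, the weight curve is a specific Seifert leaf on the boundary of a regular fibred neighbourhood, and any two simple closed curves in its free homotopy class are isotopic once the Seifert fibrations have been matched. Should this isotopy step fail to go through directly, one would fall back to an $s$-cobordism statement in the spirit of Corollary 5, but in the present geometric setting the combination of asphericity and rigidity of Seifert-fibred $4$-manifolds with hyperbolic base orbifold should deliver the actual homeomorphism.
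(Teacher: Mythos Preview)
Your approach is the same as the paper's: split into two implications, use the preceding theorem for the group-theoretic content, and invoke rigidity of Seifert-fibred aspherical manifolds to promote the isomorphism $D$ to a homeomorphism of knot manifolds carrying the meridional weight class to the meridional weight class, then restrict to the exteriors.

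Two small points. First, your parenthetical justification for why a homeomorphism of exteriors preserves meridians is wrong: the meridian is \emph{not} in the kernel of $\pi_1(\partial X)\to\pi_1(M(K))$ (it maps to a weight element). The correct reason is simply that $\pi_1(\partial X)\cong\mathbb{Z}$ is generated by the meridian, so any self-homeomorphism of $\partial X$ sends it to $\pm$ itself. Second, the paper obtains the realisation step more directly by citing \cite[Theorem 11.2.4]{LR}, which handles Seifert fiberings in dimension~4; your route through Scott and Boileau--Otal is for 3-manifolds, so to make it rigorous you must say explicitly that you realise $d\in Aut(\nu)$ by a homeomorphism $\tilde d$ of $N=M_m(k)$, use ``homotopy implies isotopy'' on $N$ to get $\tilde d\,\hat c^{s_1}$ isotopic to $\hat c^{s_2}\tilde d$, and then extend $\tilde d$ to a homeomorphism of the mapping tori. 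That works, but it is a step you glossed over when you said the 3-manifold rigidity ``applies to $\widetilde{\mathbb{SL}}\times\mathbb{E}^1$-manifolds''.
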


\begin{proof}
A homeomorphism of knot exteriors must carry meridians to meridians 
(up to inversion),  and so the condition is necessary.

The hypothesis $\frac1m+\frac1p+\frac1q<1$ is equivalent to $M_m(k)$
being aspherical and $\nu$ not solvable.
Hence every automorphism of $\pi\tau_{m,s_2}k$ is realized by a self-homeomorphism
of $M(\tau_{m,s_2}k)$ \cite[Theorem 11.2.4]{LR}.
Thus we may assume that there is a homeomorphism $h$ realizing $D$ 
which carries the (free) isotopy class of a meridian for  $\tau_{m,s_1}k$
in $M(\tau_{m,s_1}k)$ to a meridian for $\tau_{m,s_2}k$ in  $M(\tau_{m,s_2}k)$.
Such a homeomorphism clearly restricts to a homeomorphism of the exteriors.
The corollary then follows from the theorem.
\end{proof}

Since torus knots are invertible, 
their (branched) twist spins are $+$amphicheiral \cite{Li85}.
Thus such branched twist spins differ at most by Gluck reconstruction.
TOP surgery may be used to give a similar result for $\tau_6k_{2,3}$
(corresponding to $\frac1m+\frac1p+\frac1q=1$),
and more generally when $M(K)$ is aspherical and $\pi$ is torsion-free and solvable.
This is so if $K=\tau_34_1$ or $K=\tau_2k$ for certain Montesinos knots $k$.
(See \cite[16.\S4]{FMGK}.)

A further argument comes close to determining the number of distinct branched 
twist spins of torus knots.

\begin{theorem}
Let $m,p,q, s,t$ be positive integers such that $(p,q)=(m,s)=(m,t)=1$ and
$\frac1m+\frac1p+\frac1q<1$.
If $\tau_{m,s}k_{p,q}\cong\tau_{m,t}k_{p,q}$ then $s^4\equiv{t^4}$ mod $(m)$.
\end{theorem}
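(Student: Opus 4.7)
My plan combines Theorem 8 with an analysis of $Aut(\nu)$ for $\nu=\pi_1(M_m(k_{p,q}))$. By Theorem 8, the homeomorphism $\tau_{m,s}k_{p,q}\cong\tau_{m,t}k_{p,q}$ yields an automorphism $d\in Aut(\nu)$ and a sign $\varepsilon\in\{\pm1\}$ with $d\theta^sd^{-1}=\theta^{\varepsilon t}$, where $\theta$ is the automorphism induced by the canonical generator of the branched covering.

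Under the hypothesis $\frac1m+\frac1p+\frac1q<1$, $M_m(k_{p,q})=M(m,p,q)$ is an aspherical Seifert-fibered $\widetilde{\mathbb{SL}}$-manifold, so $\nu$ sits in a central extension $1\to\mathbb{Z}\to\nu\to\Delta(m,p,q)\to1$ with $\Delta(m,p,q)$ a cocompact Fuchsian triangle group acting on $\mathbb{H}^2$. Because the Galois $\mathbb{Z}/m\mathbb{Z}$-action is contained in the Seifert $S^1$-flow, $\theta$ is inner in $Aut(\nu)$, say $\theta=c_{g_0}$, with $\bar g_0\in\Delta$ the rotation of order $m$ fixing one of the cone points of $S^2(m,p,q)$. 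The centre $\zeta\nu$ is characteristic, so $d$ descends to $\bar d\in Aut(\Delta)$, and modulo $\zeta\nu$ the conjugation equation becomes $\bar d(\bar g_0)^s\sim\bar g_0^{\varepsilon t}$ in $\Delta$. Since $\langle\bar g_0\rangle$ is the full stabilizer of its fixed point in $\mathbb{H}^2$, it is self-normalizing in $\Delta$, so $\bar d(\bar g_0)=\bar g_0^r$ for a unique $r\in(\mathbb{Z}/m\mathbb{Z})^\times$ with $rs\equiv\varepsilon t\pmod m$.

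Finally I bound $r$. The assignment $\bar d\mapsto r$ defines a homomorphism from the $\langle\bar g_0\rangle$-normalizing subgroup of $Aut(\Delta)$ into $(\mathbb{Z}/m\mathbb{Z})^\times$, factoring through $Out(\Delta(m,p,q))\cong MCG(S^2(m,p,q))$. For pairwise distinct $m,p,q$ this mapping class group is $\mathbb{Z}/2\mathbb{Z}$, generated by the orientation-reversing reflection inverting each cone-point rotation, so $r\in\{\pm1\}$; combined with the sign $\varepsilon\in\{\pm1\}$ this yields $s\equiv\pm t\pmod m$ and hence $s^4\equiv t^4\pmod m$. The main obstacle lies here: in the exceptional cases $m\in\{p,q\}$ the orbifold mapping class group is $(\mathbb{Z}/2\mathbb{Z})^2$, but the additional cone-point swap sends $\langle\bar g_0\rangle$ to a different conjugacy class and so cannot appear in the normalizer; in the non-pairwise-coprime situations one must also verify that the kernel of $Out(\nu)\to Out(\Delta)$ is trivial, which follows from the finiteness of $\nu^{ab}=H_1(M(m,p,q);\mathbb{Z})$ (guaranteed by the non-vanishing Seifert Euler number) and hence $\mathrm{Hom}(\nu,\mathbb{Z})=0$.
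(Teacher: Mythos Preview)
Your argument is correct and reaches the goal by a route that is parallel to, but genuinely different from, the paper's. Both proofs invoke Theorem~8 to obtain a conjugacy $d\theta^{s}d^{-1}=\theta^{\pm t}$ in $Aut(\nu)$ and then descend to the Fuchsian triangle group $\Delta=\nu/\zeta\nu$. From there the two diverge. The paper squares the conjugating automorphism so that $\alpha^{2}$ is orientation-preserving, realizes $\bar\theta$ and $\bar\alpha^{2}$ as elements $\bar g,\bar h\in PSL(2,\mathbb{R})$, and then uses the elementary fact that the normalizer in $PSL(2,\mathbb{R})$ of a nontrivial elliptic cyclic group lies in a conjugate of $SO(2)$; this yields $s^{2}\equiv\pm t^{2}\pmod m$ uniformly, with no case analysis. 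You instead compute $Out(\Delta(m,p,q))$ as the mapping class group of the base orbifold $S^{2}(m,p,q)$ and read off the possible values of the exponent $r$, which forces a short case split when two cone orders coincide. Your approach in fact gives the sharper conclusion $s\equiv\pm t\pmod m$ (and the paper's argument, once one notes that the subgroup in $PSL(2,\mathbb{R})$ is necessarily abelian rather than merely ``abelian or dihedral'', does as well).

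One small point: your justification that $\theta$ is inner (``the Galois $\mathbb{Z}/m\mathbb{Z}$-action is contained in the Seifert $S^{1}$-flow'') is not literally true when $m,p,q$ are not pairwise coprime. What is always true is that the monodromy $(u,v,w)\mapsto(\zeta_{m}u,v,w)$ preserves the Seifert projection $(u,v,w)\mapsto[v^{p}:w^{q}]$ and acts trivially on the base, so $\bar\theta$ is inner in $Aut(\Delta)$; then your observation that $\mathrm{Hom}(\nu,\mathbb{Z})=0$ (finiteness of $H_{1}(M(m,p,q))$) upgrades this to $\theta$ inner in $Aut(\nu)$ and ensures $\bar g_{0}$ has order exactly $m$. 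You essentially say this in your final sentence, so the proof stands.
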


\begin{proof}
We may assume that $m=5$ or $m\geq7$, for otherwise $s^4=t^4=1$.
Let $\nu=\pi_1(M(m,p,q))$.
The base $B$ of the Seifert fibration of $M(m,p,q)$ is an 
orientable hyperbolic 2-orbifold, and $\pi^{orb}B=\nu/\zeta\nu$.
The group of lifts of orientation preserving automorphisms of $B$ to 
$\widetilde{B}=\mathbb{H}^2$ is the normalizer of $\nu$ in $PSL(2,\mathbb{R})$.
The characteristic automorphism $\theta$ corresponding to meridians 
of $\tau_mk_{p,q}$ acts orientably on $B$, and so determines an element of 
$PSL(2,\mathbb{R})$.

If $\tau_{m,s}k_{p,q}\cong\tau_{m,t}k_{p,q}$ then 
$\theta^t=\alpha\theta^{\pm{s}}\alpha^{-1}$ in $Aut(\nu)$.
Hence $\theta^{t^2}=\alpha^2\theta^{s^2}\alpha^{-2}$, 
and $\alpha^2$ is orientation preserving.
Let $\bar{g}$ and $\bar{h}$ be the images of $\theta$ 
and $\alpha^2$ in $PSL(2,\mathbb{R})$.
Then $\bar{g}$ has order $m$, and $\bar{h}\bar{g}^{s^2}\bar{h}^{-1}=\bar{g}^{t^2}$.
Since  $\langle\bar{g},\bar{h}\rangle$ 
is a subgroup of $PSL(2,\mathbb{R})$ and $m>2$,
this subgroup is either abelian or dihedral.
Hence $s^2\equiv\pm{t^2}$ {\it mod} $(m)$, 
and so $s^4\equiv{t^4}$  {\it mod} $(m)$.
\end{proof}

If $m>10$ then $\varphi(m)>4$,
and so there is an $r$ such that $(m,r)=1$ and $r^4\not\equiv1$ {\it mod\/} $(m)$.
Thus if  $(m,t)=1$ then $X(\tau_{m,rt}k_{p,q})\not\cong{X(\tau_{m,t}k_{p,q})}$, 
although the knot groups are isomorphic.

\begin{cor}
If $m=r^k$ or $2r^k$ for some prime $r\equiv3$ mod $(4)$ then
$X(\tau_{m,s}k_{p,q})\cong{X(\tau_{m,t}k_{p,q})}$ 
if and only if $s^2\equiv{t^2}$ mod $(m)$.
\end{cor}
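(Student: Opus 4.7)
The plan is to combine the conjugacy criterion for homeomorphism of knot exteriors (Corollary 9) with the arithmetic extracted inside the proof of Theorem 8, and then exploit the cyclic structure of $(\mathbb{Z}/m\mathbb{Z})^\times$ for the values of $m$ permitted here. The hypothesis $\frac{1}{m}+\frac{1}{p}+\frac{1}{q}<1$ places us in the setting of both previous results, so we may translate the homeomorphism condition $X(\tau_{m,s}k_{p,q})\cong X(\tau_{m,t}k_{p,q})$ into the algebraic condition that $\theta^s$ be conjugate in $Aut(\nu)$ to $\theta^t$ or $\theta^{-t}$. Moreover, condition (4) of Theorem 2 guarantees that $\theta$ itself has order exactly $m$ in $Aut(\nu)$, so $\theta^a=\theta^b$ whenever $a\equiv b\pmod{m}$.

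For necessity I would assume the exteriors are homeomorphic and follow the argument of Theorem 8 only as far as the intermediate conclusion $s^2\equiv\pm t^2\pmod{m}$ (which is extracted explicitly just before the second squaring that weakens it to $s^4\equiv t^4$). Since $s$ and $t$ are units modulo $m$, the possibility $s^2\equiv-t^2\pmod{m}$ would exhibit $-1$ as a square in $(\mathbb{Z}/m\mathbb{Z})^\times$. For $m=r^k$ or $m=2r^k$ with $r$ an odd prime, this group is cyclic of order $r^{k-1}(r-1)$, and $-1$ is a square in a cyclic group if and only if $4$ divides its order, equivalently $r\equiv 1\pmod{4}$. The standing hypothesis $r\equiv 3\pmod{4}$ therefore rules out the minus sign and forces $s^2\equiv t^2\pmod{m}$.

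For sufficiency, assume $s^2\equiv t^2\pmod{m}$. Cyclicity of $(\mathbb{Z}/m\mathbb{Z})^\times$ means the only solutions of $x^2\equiv 1\pmod m$ are $x\equiv\pm 1\pmod m$, so $s\equiv\pm t\pmod{m}$. Since $\theta$ has order exactly $m$ in $Aut(\nu)$, either $\theta^s=\theta^t$ or $\theta^s=\theta^{-t}$, and in either case $\theta^s$ is trivially conjugate to $\theta^{\pm t}$. Corollary 9 then delivers the desired homeomorphism of exteriors.

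The substantive points to verify are that $\theta$ really has order $m$ in $Aut(\nu)$ (not merely in the quotient $Out(\nu)$ or in the $PSL(2,\mathbb{R})$ image used in Theorem 8), and that the proof of Theorem 8 can be paused at $s^2\equiv\pm t^2\pmod{m}$ rather than only at $s^4\equiv t^4\pmod{m}$; both are already explicit in the preceding material. The remaining input is the elementary fact that $-1$ is a nonsquare modulo $r^k$ and modulo $2r^k$ precisely when $r\equiv 3\pmod{4}$, so I do not expect any real obstacle beyond careful bookkeeping.
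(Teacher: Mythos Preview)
Your argument is correct and follows essentially the same route as the paper. The paper's one-line proof records only the arithmetic fact that for these $m$ one has $s^4\equiv t^4\pmod m$ if and only if $s\equiv\pm t\pmod m$, and leaves the reader to assemble the chain $X\cong X'\Rightarrow s^4\equiv t^4\Rightarrow s\equiv\pm t\Rightarrow s^2\equiv t^2$ (and conversely $s^2\equiv t^2\Rightarrow s\equiv\pm t\Rightarrow X\cong X'$) from the preceding results. You do the same thing with slightly different bookkeeping: you stop the argument of the preceding theorem at the intermediate conclusion $s^2\equiv\pm t^2$ and eliminate the minus sign by observing that $-1$ is a nonresidue, whereas the paper passes to $s^4\equiv t^4$ first and then uses that the $4$-torsion in the cyclic group $(\mathbb{Z}/m\mathbb{Z})^\times$ of order $\equiv 2\pmod 4$ is just $\{\pm1\}$. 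These are two phrasings of the same elementary fact about cyclic groups.

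Two small remarks. First, what you call ``Theorem~8'' (the source of $s^2\equiv\pm t^2$) is the theorem immediately preceding this corollary, numbered Theorem~10 in the paper; Theorem~8 is the conjugacy criterion for knot equivalence. Second, note that the preceding theorem is stated for $\tau_{m,s}k\cong\tau_{m,t}k$ rather than for $X(\tau_{m,s}k)\cong X(\tau_{m,t}k)$; you are right that its proof actually only uses the conjugacy condition on $\theta^s,\theta^{\pm t}$, which by Corollary~9 is exactly the exterior-homeomorphism hypothesis, so the transfer is legitimate.
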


\begin{proof}
In this case $s^4\equiv{t^4}$ {\it mod} $(m)$ if and only if $s\equiv\pm{t}$ {\it mod} $(m)$.
\end{proof}

In particular, if $m=7$ then there are $3=\frac12\varphi(7)$ distinct knot exteriors
$X(\tau_{7,s}k_{p,q})$, for each torus knot $k_{p,q}$.

If $k$ is composite then $M_m(k)$ is a proper connected sum,
whereas if $k$ is prime then $M_m(k)$ is irreducible.
Thus a branched twist spin of a prime knot is never 
a branched twist spin of a composite knot.

\begin{theorem}
If $\pi$ is a group then $\pi\cong\pi\tau_{m,s}k$ for at most finitely
many branched twist spins $\tau_{m,s}k$.
\end{theorem}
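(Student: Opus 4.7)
The plan is to bound, from $\pi$ alone, each of the three pieces of data $(m, s, k)$ that determine a branched twist spin, and then to observe that any branched twist spin with group $\pi$ arises from some such triple. First I would analyse $\pi'$ via its Grushko decomposition $\pi' = G_1 * \cdots * G_r$ into indecomposable free factors, which is unique up to permutation and isomorphism (and contains no free $\mathbb Z$ factor by condition (5) of Theorem 2). By Theorem 2 each $G_i = \pi_1(N_i)$ for a closed orientable irreducible 3-manifold $N_i$ that is either aspherical or has universal cover $S^3$; in either case $N_i$ is determined up to homeomorphism by $G_i$. If $\pi \cong \pi\tau_{m,s}k$, then the prime decomposition $k = k_1 \# \cdots \# k_r$ satisfies $M_m(k_i) \cong N_i$, so the unordered list of pieces $N_i$ is recovered from $\pi$.

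Next I would bound the twist order $m$ using the structural results established earlier. If $\pi'$ has a finite indecomposable factor, then $m \leq 5$ by condition (6) of Theorem 2. If $\pi'$ has a one-ended indecomposable factor with trivial centre, then Corollary 3 forces $m = [\pi : \pi'.\zeta\pi]$. In the remaining case every infinite factor $G_i$ has nontrivial centre, so each corresponding summand $k_i$ is either a torus knot (for $m \geq 3$) or has $m = 2$. For $r = 1$ in the torus-knot subcase Theorem 4 applies. When $r \geq 2$ the quotient $G_i / \zeta G_i$ is a triangle group $\Delta(m, p_i, q_i)$, from which the unordered triple $\{m, p_i, q_i\}$ is recovered, constraining $m$ to at most three values per $i$; the case $m = 2$ is simply an additional candidate. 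Thus $m$ lies in a finite set depending only on $\pi$, and then $s$ ranges over at most $\varphi(m)$ residues for each admissible $m$.

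Finally I would bound the number of prime knots $k_i$ with $M_m(k_i) \cong N_i$ for each admissible $m$. When $G_i$ is finite, this follows from Yoshikawa's classification \cite{Yo80}. When $N_i$ is aspherical, it follows from the finiteness of $\mathbb Z/m$-actions on $N_i$ up to conjugation in $\mathrm{Diff}(N_i)$: Mostow rigidity and finiteness of $\mathrm{Isom}(N_i)$ handle the hyperbolic case; Johannson's deformation theorem applied to the equivariant JSJ decomposition handles the Haken case; and the Seifert-fibred case follows from the classification of periodic maps on Seifert manifolds. Each such action with quotient $S^3$ and fixed point set a circle determines its branch knot, so finitely many $k_i$ occur per summand, giving finitely many triples $(m, s, k)$ overall. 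The principal obstacle is this last step: assembling a uniform finiteness statement for cyclic branched-cover presentations of $N_i$ across its possible geometric types, and in particular verifying the rigidity of the $\mathbb Z/m$-symmetry inside $\mathrm{Diff}(N_i)$.
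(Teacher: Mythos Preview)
Your outline matches the paper's proof in structure: both factor $\pi'$ into indecomposables, recover from this the number $r$ of prime summands of $k$ and the groups $\pi_1(M_m(k_i))$, bound $m$ by the same case split on centres (finite factor forces $m\leq5$; a one-ended factor with trivial centre gives $m=[\pi:\pi'\cdot\zeta\pi]$; torus-knot factors leave at most three candidates via the triangle-group quotient, plus $m=2$), and finally bound the number of prime knots $k_i$ realising each factor. The difference is in this last step. The paper invokes specific results case by case: Fukuda--Ishikawa \cite{FI23} when $m\geq3$ or ($m=2$ and $N_i$ Haken), Reni \cite{Re00} for $m=2$ with $N_i$ hyperbolic (at most nine knots), Theorem~6 for torus knots, and Montesinos \cite{Mo73} for the finite and $m=2$ Seifert cases. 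You instead appeal to a uniform statement that a given aspherical $N_i$ admits only finitely many $\mathbb{Z}/m$-actions up to conjugacy in $\mathrm{Diff}(N_i)$. That is a legitimate alternative (it follows from geometrization of 3-orbifolds), and arguably cleaner, though it hides the explicit bounds the cited papers supply and, as you note, requires assembling rigidity results across geometric types.

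Two small corrections. Your reference to ``Theorem~4'' in the torus-knot subcase should be to Theorem~6 (Theorem~4 concerns geometric decompositions). And your claim that $N_i$ is determined up to homeomorphism by $G_i$ fails when $G_i$ is finite cyclic (lens spaces); this does not break the argument since you treat the finite case separately, but the relevant citation there is \cite{Mo73} rather than \cite{Yo80}, which classifies 2-knot groups with finite commutator subgroup rather than branched-cover presentations of spherical space forms.
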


\begin{proof}
Suppose that $\pi\cong\pi\tau_{m,s}k$ for some 1-knot $k$ and some $m,s$.
Let $k=\Sigma_{i\leq{r}}{k_i}$ be the factorization of $k$ as a sum of prime knots.
Then $\pi\cong{G_1}*_\mathbb{Z}\dots*_\mathbb{Z}G_r$ 
is the free product of the $G_i=\pi\tau_{m,s}k_i$, 
with amalgamation over subgroups generated by meridians,
and $\pi'\cong\star_{i\leq{r}}{G_i'}$.
Since $\pi'$ is finitely generated, 
it is a free product of indecomposable subgroups,
and the factors which are not infinite cyclic are unique up to
conjugacy and re-indexing.
The subgroups $G_i'$ are indecomposable and none are infinite cyclic.
Hence the set $\{G_i'\}$ and the number $r$ of prime factors of $k$ 
are determined by $\pi$.

The quotient of $\pi$ by the normal closure of the subgroups $\{G_j':j\not=i\}$
is $G_i$. 
Then $G_i$ is the group of a branched twist simple knot in at most finitely many ways.
This follows from \cite{Mo73} if $G_i'$ is finite.
If $G_i'$ is infinite then $G_i'\cong\pi_1(N_i)$, 
where $N_i$ is aspherical and is Haken,  hyperbolic or Seifert fibred.
If $\zeta{G_i'}=1$ then $m=[\pi:\pi'.\zeta\pi]$, by Corollary 3,
and $k_i$ is not a torus knot.
If $m>2$ or if $m=2$ and $N_i$ is Haken
then $k_i$ is determined by $G_i$ and the pair $(m,s)$,
by the work of Fukuda and Ishikawa \cite{FI23} (cited above).
If $m=2$ and $N_i$ is hyperbolic then there are at most 9 possibilities for $k_i$ \cite{Re00}.
If $\zeta{G_i}\cong\mathbb{Z}^2$ then $k_i$ is a torus knot 
and there are at most 3 possibilities for $k_i$, by Theorem 6.
Finally, if $\zeta{G_i'}\not=1$ but $\zeta{G_i}\cong\mathbb{Z}$
then $m=2$ and $k_i$ is a Montesinos knot;
the number of possibilities for $k_i$ depends on $G_i'$ and may be large, 
but is finite.

Thus there are at most finitely many candidates for the summands $k_i$, 
and hence for $k$.
Since there are only finitely many possibilities for $m$ (and hence $s$), 
by Corollary 3 and Theorem 6, the result follows.
\end{proof}

The following corollary follows immediately from Theorems 1 and 9.

\begin{cor}
If $K$ is a $2$-knot and $\pi{K}'$ is indecomposable then 
$M(K)\\ \cong{M(\tau_{m,s}k)}$ 
for at most finitely many branched twist spins $\tau_{m,s}k$. 
\qed
\end{cor}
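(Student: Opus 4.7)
My plan is to reduce the statement directly to the preceding Theorem 12 via the trivial observation that homeomorphism of knot manifolds implies isomorphism of knot groups. First I would note that if $M(K)\cong M(\tau_{m,s}k)$ then $\pi_1(M(K))\cong\pi_1(M(\tau_{m,s}k))$, and for $2$-knots this gives $\pi K\cong\pi\tau_{m,s}k$. Hence the collection of branched twist spins $\tau_{m,s}k$ with knot manifold homeomorphic to $M(K)$ injects into the collection of branched twist spins with group isomorphic to $\pi K$. Theorem 12 says the latter collection is finite, and the corollary follows immediately.

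The hypothesis that $\pi K'$ be indecomposable is not strictly required for the finiteness argument above (Theorem 12 imposes no restriction on $\pi$), but it is what makes Theorem 1 available and what justifies mentioning Theorem 1 at all. If one additionally assumes $\pi K'$ is not finite cyclic, then applying Theorem 1 pairwise to the branched twist spins $\tau_{m,s}k$ with group $\pi K$ --- all of which are fibred with indecomposable, non-cyclic commutator subgroup isomorphic to $\pi K'$ --- shows that these twist spins all have mutually homeomorphic knot manifolds. Thus in that subcase the two collections above coincide, so the Theorem 12 count is sharp. For $\pi K'$ a nontrivial finite cyclic group, i.e.\ the lens-space fibre situation covered by 2-bridge knots, we lose this equivalence, but the one-sided containment still yields finiteness.

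The main difficulty is entirely absorbed into Theorem 12 itself, whose proof uses the Grushko-type uniqueness of the free-product decomposition of $\pi'$ to recover the prime factorization $k=\Sigma k_i$, then handles each summand using Fukuda--Ishikawa, Reid's bound in the $m=2$ hyperbolic case, and Theorem 6 for torus-knot summands, bounding $(m,s)$ via Corollary 3 and Theorem 6. Granted Theorems 1 and 12, the present corollary is a one-line deduction and there is no real obstacle beyond correctly invoking them.
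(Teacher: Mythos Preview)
Your argument is correct and matches the paper's: the corollary is stated as an immediate consequence of Theorem~1 and the finiteness theorem (Theorem~12), and your reduction via ``$M(K)\cong M(\tau_{m,s}k)\Rightarrow\pi K\cong\pi\tau_{m,s}k$'' followed by Theorem~12 is exactly the intended one-line deduction. Your further remark that Theorem~1 serves only to identify the two collections (hence make the count sharp) in the non--finite-cyclic case is accurate; one small slip is that the $m=2$ hyperbolic bound is due to Reni, not Reid.
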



\end{document}